\documentclass{amsart}
\usepackage{amsmath,amssymb,bm}
\usepackage{color}
%
%
\usepackage{amsmath}
\usepackage{amssymb}
\usepackage{amscd}
\usepackage{amsthm}
\usepackage[dvips]{graphicx}


\theoremstyle{definition}
\newtheorem{defn}{Definition}[section]
\newtheorem{example}[defn]{Example}

\theoremstyle{plain}
\newtheorem{thm}[defn]{Theorem}
\newtheorem{prop}[defn]{Proposition}
\newtheorem{lem}[defn]{Lemma}

\newtheorem{question}[defn]{Question}

%


\newcommand{\Mdeg}{\operatorname{maxdeg}}
\newcommand{\rank}{\operatorname{rank}}

\newcommand{\KH}{\operatorname{KH}}

\numberwithin{equation}{section}

%
\title[]{The behavior of the maximal degree of the Khovanov homology under twisting}
\author{Keiji Tagami}
\date{\today}
\address{
Department of Mathematics,
Tokyo Institute of Technology,
Oh-okayama, Meguro, Tokyo 152-8551, Japan
}
\email{tagami.k.aa@m.titech.ac.jp}
\begin{document}
\maketitle
\begin{abstract}
In this paper, we study an asymptotic behavior of the maximal homological degree of the non-zero Khovanov homology groups under twisting.
\end{abstract}
\section{Introduction}\label{intro}
In \cite{khovanov1}, for each oriented link $L$, Khovanov defined a graded chain complex whose graded Euler characteristic is equal to the Jones polynomial of $L$. Its homology groups are link invariants called the Khovanov homology groups. 
Throughout this paper, we only consider the rational Khovanov homology. 
The Khovanov homology has two gradings, homological degree $i$ and $q$-grading $j$. 
By $\KH^{i}(L)$, we denote the homological degree $i$ term of the Khovanov homology groups of a link $L$ and by $\KH^{i,j}(L)$, we denote the homological degree $i$ and $q$-grading $j$ term of the Khovanov homology groups of $L$. 
\par
The maximal homological degree of the non-zero Khovanov homology groups of a link gives a lower bound of the minimal positive crossing number of the link (see Proposition~$\ref{i_max}$). The minimal positive crossing number of a link is the minimal number of the positive crossings of diagrams of the link. 
From this fact, it seems that the Khovanov homology estimates the positivity of links. 
\par
Sto{\v s}i{\'c} \cite[Theorem~$2$]{stosic2} showed that the maximal homological degree of the non-zero Khovanov homology groups of the $(2k, 2kn)$-torus link is $2k^{2}n$. 
By using the same method as Sto{\v s}i{\'c}'s, the author \cite[Corollary~$1.2$]{tagami1} proved that the maximal homological degree of the non-zero Khovanov homology groups of the $(2k+1, (2k+1)n)$-torus link is $2k(k+1)n$. 
These results intimate that the maximal degree of the non-zero Khovanov homology groups grows as the number of full-twists grows. 
\par
Let $L$ be an oriented link and $C_{p}$ be a disk which intersects with $L$ at $p$ points transversely with the same orientations as in Figure~$\ref{twist2}$. 
Then, for any positive integer $n$, we define a link $t_{n}(L;C_{p})$ as the link obtained from $L$ by adding $n$ full-twists at $C_{p}$. 

\begin{figure}[h!]
\begin{center}
\scalebox{0.4}{\includegraphics{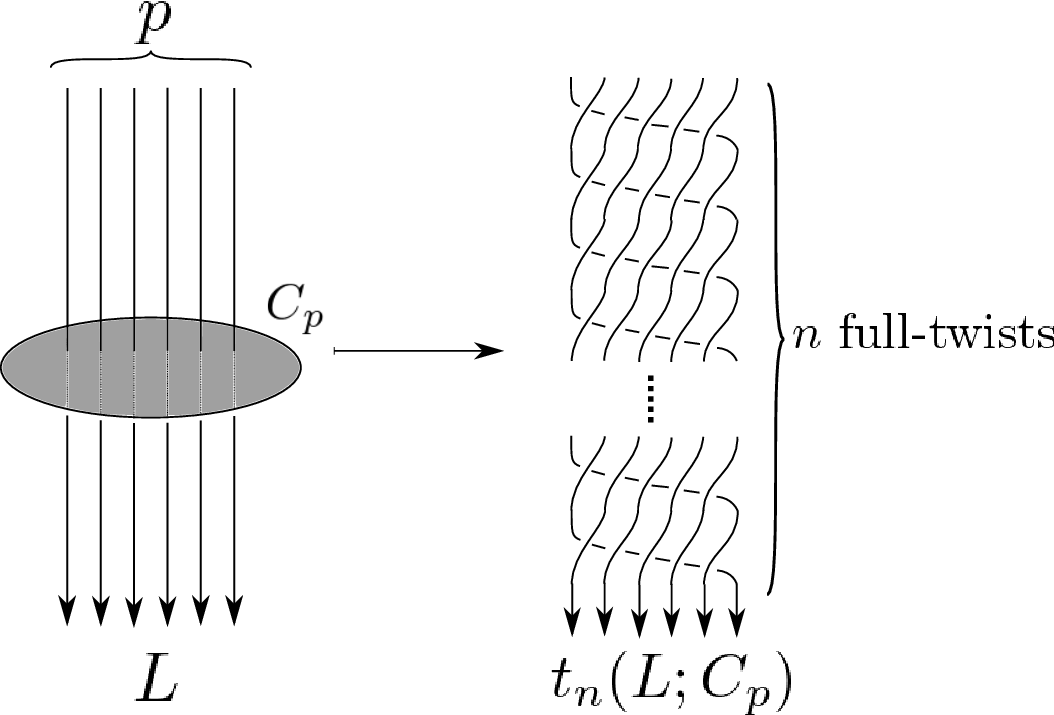}}
\end{center}
\caption{$t_{n}(L;C_{p})$. }
\label{twist2}
\end{figure}
\par
We consider the following question. 
\begin{question}\label{conjecture}
Let $L$ and $C_{p}$ be as above, and let $T_{p,pn}$ denote the positive $(p,pn)$-torus link. Does the following equality hold?
\begin{align*}
\lim_{n\rightarrow \infty}\frac{\max\{i\in\mathbf{Z}\mid\KH^{i}(t_{n}(L;C_{p}))\neq 0\}}{n}&=\lim_{n\rightarrow \infty}\frac{\max\{i\in\mathbf{Z}\mid\KH^{i}(T_{p, pn})\neq 0\}}{n}.
\end{align*}
\end{question}
Note that Sto{\v s}i{\'c} and we proved 
\begin{align}
\lim_{n\rightarrow \infty}\frac{\max\{i\in\mathbf{Z}\mid\KH^{i}(T_{p, pn}))\neq 0\}}{n}
=\begin{cases}
2k^{2}& \text{if\ \ }p=2k, \\
2k(k+1)&\text{if\ \ }p=2k+1. \label{torus}\\
\end{cases}
\end{align} 
\par
We proved the following, providing evidence towards an affirmative answer to Question~$\ref{conjecture}$. 
\begin{thm}[{\cite[Theorem~$1.3$ and Proposition~$1.4$]{tagami1}}]\label{tagami_cable}
Let $K$ be an oriented knot. 
Denote the $(p, pn)$-cabling of $K$ by $K(p,pn)$ for positive integers $p$ and $n$. 
Assume that each component of $K(p, pn)$ has an orientation induced by $K$, that is, each component of $K(p,pn)$ is homologous to $K$ in the tubular neighborhood of $K$. 
Put $c_{+}(K):=\min\{c_{+}(D)\mid D\text{\ is a diagram of \ }K\}$, where $c_{+}(D)$ is the number of the positive crossings of $D$. 
If $n\geq 2c_{+}(K)$, then we have the following for any positive integer $k$. 
\begin{align*} 
&\max\{i\in\mathbf{Z}\mid\KH^{i}(K(2k, 2kn))\neq 0\}=2k^{2}n, \\
2k(k+1)n\leq &\max\{i\in\mathbf{Z}\mid\KH^{i}(K(2k+1, (2k+1)n))\neq 0\}
\leq 2k(k+1)n+c_{+}(K). \\
\end{align*}
In particular, we have 
\begin{align*}
\lim_{n\rightarrow \infty}\frac{\max\{i\in\mathbf{Z}\mid\KH^{i}(K(p, pn))\neq 0\}}{n}=
\begin{cases}
2k^{2}& \text{if \ }p=2k, \\
2k(k+1)& \text{if \ }p=2k+1. 
\end{cases}
\end{align*}
\end{thm}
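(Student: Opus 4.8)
The plan is to isolate the effect of a single full twist on the top homological degree and then iterate, with the unoriented skein long exact sequence as the engine and the known torus values of Sto{\v s}i{\'c} and \cite{tagami1} as anchors. Throughout write $i_{\max}(L):=\max\{i\in\mathbf{Z}\mid\KH^{i}(L)\neq 0\}$. Fix a diagram $D$ of $K$ realizing $c_{+}(K)$, with $c_{+}$ positive and $c_{-}$ negative crossings, take its $p$–parallel $D^{(p)}$ in the blackboard framing, and note that inserting full twists into the $p$ parallel strands yields diagrams of $K(p,pm)$ for the relevant $m$; because each component is homologous to $K$, the strands in the twist region run in parallel and every crossing there is positive. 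The only analytic input is that, for a positive crossing of a diagram $E$ with oriented resolution $E_{0}$ and the other (reoriented) resolution $E_{1}$, there is a long exact sequence relating $\KH^{\ast,\ast}(E)$, $\KH^{\ast,\ast}(E_{0})$, and a copy of $\KH^{\ast,\ast}(E_{1})$ shifted according to the number of negative crossings created. Reading off homological gradings gives the two inequalities $i_{\max}(E)\le\max\{i_{\max}(E_{0}),\,i_{\max}(E_{1})+e+1\}$ and, whenever $i_{\max}(E_{0})>i_{\max}(E_{1})+e$, the matching lower bound $i_{\max}(E)\ge i_{\max}(E_{0})$, where $e$ is the relevant correction term.

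First I would prove a one–twist stabilization lemma: if $L$ already contains sufficiently many positive full twists on the $p$ strands, then adding one more full twist raises $i_{\max}$ by exactly $2k^{2}$ when $p=2k$, and by $2k(k+1)$ up to a bounded correction when $p=2k+1$. The model is the toric case, where $i_{\max}(T_{2k,2kn})=2k^{2}n$ and $i_{\max}(T_{2k+1,(2k+1)n})=2k(k+1)n$ display precisely these per–twist increments; the content of the lemma is that the increment becomes insensitive to the companion once enough twists are present. I would obtain it by resolving the crossings of the extra full twist one at a time with the skein sequence above: the oriented resolutions reproduce the link with lower twisting, while the corrected $E_{1}$–type resolutions land in strictly smaller homological degree and so cannot contribute to the top.

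Next I would run the induction. Beginning from $D^{(p)}$ with enough initial twists to represent $K(p,p\cdot 2c_{+})$—this is where the hypothesis $n\ge 2c_{+}$ enters, guaranteeing that the base link is already ``twisted enough'' for the stabilization lemma to apply at every later step—each added full twist contributes the toric increment, so that $i_{\max}(K(p,pn))$ equals $i_{\max}(T_{p,pn})$ plus a boundary term depending only on the base. For $p=2k$ the even torus value $2k^{2}n$ is rigid enough to force this boundary term to vanish in the top degree, giving the exact equality $i_{\max}(K(2k,2kn))=2k^{2}n$. For $p=2k+1$ the same scheme yields the lower bound $i_{\max}(K(2k+1,(2k+1)n))\ge 2k(k+1)n$, while the corrections coming only from the positive crossings of $D$ bound the boundary term above by $c_{+}(K)$, giving $i_{\max}(K(2k+1,(2k+1)n))\le 2k(k+1)n+c_{+}(K)$; Proposition~\ref{i_max} provides the conceptual link between this surplus and the positive crossings. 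Dividing by $n$ and letting $n\to\infty$ then produces the stated limits.

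I expect the main obstacle to be the sharp control of the top homological degree through the skein sequences—specifically, showing in the even case that the corrections from the non–toric crossings of $D$ genuinely vanish in degree $2k^{2}n$ rather than being merely bounded. This amounts to proving that every $E_{1}$–type contribution sits strictly below the top, which is exactly where the threshold $n\ge 2c_{+}$ and the precise even value $2k^{2}n$ must be used in tandem. In the odd case the analogous vanishing fails by a bounded amount, and this failure is the source of the $c_{+}(K)$ slack in the upper bound.
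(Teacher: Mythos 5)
First, note that the paper does not prove this statement: Theorem~\ref{tagami_cable} is imported verbatim from \cite[Theorem~1.3 and Proposition~1.4]{tagami1}, so there is no in-paper proof to compare your attempt against. Judged on its own terms, your proposal has the right general shape (resolve the crossings of an added full twist via skein long exact sequences, use the torus-link values as the per-twist increment, and let the hypothesis $n\ge 2c_+(K)$ absorb the companion's crossings), but its two load-bearing steps are asserted rather than proved.

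The first gap is the lower bound. Your only mechanism for producing $i_{\max}(E)\ge i_{\max}(E_0)$ is the skein exact sequence under the hypothesis $i_{\max}(E_0)>i_{\max}(E_1)+e$; but $E_1$ is itself a cabled link whose top homological degree is exactly the kind of quantity being computed, so the induction as stated is circular unless you supply an independent a priori bound on $i_{\max}(E_1)$, which you do not. The route actually used (both in \cite{tagami1} and in the proof of Theorem~\ref{mainthm} in the present paper) is different in kind: one bounds $i_{\max}$ from below by combining the maximal degree of the Jones polynomial (which grows linearly in the number of twists and, being a coefficient of the graded Euler characteristic, forces a nonzero homology group in the corresponding $q$-grading) with the support estimate $j-2i\le 2-s_0(D)$ of Theorem~\ref{spanning}. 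Nothing in your sketch plays this role. The second gap is the exact upper bound $i_{\max}(K(2k,2kn))\le 2k^2n$ in the even case: the phrase ``the even torus value is rigid enough to force the boundary term to vanish'' is not an argument. Proposition~\ref{i_max} applied to the natural cable diagram only yields a bound of the form $2k^2n+(2k)^2c_+(K)$ plus framing corrections, and your skein upper bound $i_{\max}(E)\le\max\{i_{\max}(E_0),\,i_{\max}(E_1)+e+1\}$ again requires control of $i_{\max}(E_1)$. Eliminating that $c_+(K)$-sized surplus in the even case, while it genuinely persists in the odd case (hence the asymmetric statement of the theorem), is precisely the content of the Sto{\v s}i{\'c}-type stability argument that \cite{tagami1} adapts, and it is the one step your outline does not supply.
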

\par
In this paper, we consider the Question~$\ref{conjecture}$ for $p=2$. 
Precisely we prove the following. 
\begin{thm}[Main Theorem]\label{mainthm}
Let $L$ be an oriented link and $C$ be a disk which intersects $L$ at two points with the same orientations as in Figure~$\ref{twist}$. Then we have
\begin{center}
$\displaystyle{\lim_{n\rightarrow \infty}\frac{\max\{i\in\mathbf{Z}\mid \KH^{i}(t_{n/2}(L;C))\neq 0\}}{n}=1}$, 
\end{center}
where $t_{n/2}(L;C)$ is the link obtained from $L$ by adding $n$ half-twists at $C$. 
\end{thm}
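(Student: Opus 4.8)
The plan is to prove two matching bounds,
\[
n - c \ \le\ \max\{i\in\mathbf{Z}\mid \KH^{i}(L_{n})\neq 0\}\ \le\ n + c_{+}(D_{L}),
\]
where $L_{n}:=t_{n/2}(L;C)$ and $c$ is a constant independent of $n$, so that dividing by $n$ and letting $n\to\infty$ forces the limit to be $1$. Fix a diagram $D_{L}$ of $L$ compatible with $C$; since the two strands meeting $C$ are coherently oriented, a positive half-twist inserts one positive crossing, and a diagram $D_{n}$ of $L_{n}$ is obtained from $D_{L}$ by inserting $n$ positive crossings in the twist region. (If the twists were negative one passes to the mirror and replaces $\max i$ by $-\min i$; the positive normalization is the one that matches the positive torus links $T_{2,2n}$ in Question~\ref{conjecture}.) The upper bound is then immediate from Proposition~\ref{i_max}: the number of positive crossings of $D_{n}$ is $c_{+}(D_{L})+n$, so $\max\{i\mid\KH^{i}(L_{n})\neq 0\}\le c_{+}(D_{L})+n$.

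For the lower bound I would apply the unoriented skein long exact sequence to the topmost twist crossing of $D_{n}$. Its oriented resolution reproduces $D_{n-1}$, hence $L_{n-1}$, while its disoriented resolution caps the two strands of $C$ together; after removing the remaining $n-1$ twist crossings by Reidemeister~I moves this is a fixed link $L^{C}$, independent of $n$. Because the strands at $C$ are coherently oriented, the cap--cup smoothing reverses orientation along the twist region, so the $n-1$ leftover kinks count as negative crossings; consequently the copy of the Khovanov complex of $L^{C}$ sitting inside that of $L_{n}$ is shifted upward in homological degree by an amount $\delta=\delta(n)=n+O(1)$ (in the model case $L=$ two-component unlink, $L_{n}=T_{2,n}$ and $L^{C}=$ unknot, one checks $\delta=n$ exactly). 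The long exact sequence of complexes then reads, for every $i$,
\[
\cdots \to \KH^{\,i-\delta}(L^{C}) \to \KH^{i}(L_{n}) \to \KH^{i}(L_{n-1}) \xrightarrow{\ \partial\ } \KH^{\,i-\delta+1}(L^{C}) \to \cdots,
\]
so the nonzero part of the shifted term is concentrated around homological degree $i_{0}:=\delta+\max\{a\mid\KH^{a}(L^{C})\neq 0\}\sim n$.

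Now I would feed the upper bound back in. For $i=i_{0}\sim n$ one has $i>c_{+}(D_{L})+(n-1)\ge\max\{i\mid\KH^{i}(L_{n-1})\neq 0\}$, so $\KH^{i_{0}}(L_{n-1})=0$, and the sequence collapses to
\[
\KH^{\,i_{0}-1}(L_{n-1}) \xrightarrow{\ \partial\ } \KH^{\,i_{0}-\delta}(L^{C}) \to \KH^{i_{0}}(L_{n}) \to 0,
\]
whence $\KH^{i_{0}}(L_{n})\cong\operatorname{coker}\partial$. Thus the lower bound reduces to showing that $\partial$ is not surjective onto the (nonzero) top group $\KH^{\max}(L^{C})$. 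Since we only need nonvanishing somewhere within a bounded window $i=n+O(1)$, there is slack: it suffices to find one non-killed generator among $i_{0},i_{0}-1,\dots,i_{0}-c$. One route to certify survival is to carry the $q$-grading along and argue that the extremal $q$-grading generator of $\KH^{\max}(L^{C})$ lies in a $q$-degree unreachable from $\KH^{i_{0}-1}(L_{n-1})$, so that component of $\partial$ vanishes; alternatively one can try a dimension count, or compare $L_{n}$ with $L_{n-2}$ across a full twist to force a rank jump.

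The main obstacle is exactly the behaviour of the connecting map $\partial$ in the top range: it genuinely need not vanish. Already for $L_{n}=T_{2,n}$ the map has rank one and cuts the two-dimensional shifted unknot group down to the one-dimensional top group $\KH^{n}(T_{2,n})$, so the proof cannot simply assert $\partial=0$ at the boundary degree. Establishing non-surjectivity of $\partial$ in general — via the relevant $q$-gradings, via an inductive dimension estimate over the bounded top window, or by invoking a Sto{\v s}i{\'c}-type stabilization of $\KH^{i,j}(L_{n})$ in the top degrees — is the crux. The other delicate-but-routine ingredient is the exact sign count of the leftover kinks that determines $\delta$ (this depends on the parity of $n$ and on the orientations at $C$); everything else, namely the reduction to the fixed link $L^{C}$ and the vanishing supplied by Proposition~\ref{i_max}, is formal.
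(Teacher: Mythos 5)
Your upper bound is fine and coincides with the paper's: the $n$ coherently oriented half-twists contribute $n$ positive crossings, so Proposition~\ref{i_max} gives $\max\{i\mid\KH^{i}(L_{n})\neq 0\}\le c_{+}(D_{0})+n$ and hence $\limsup\le 1$. The lower bound, however, is where your argument genuinely stops, and you say so yourself: after setting up the unoriented skein long exact sequence you must show that the connecting map $\partial$ fails to kill the shifted top-degree classes of the fixed link $L^{C}$ (or at least leaves something alive in a bounded window below degree $n$), and no proof of this is given. This is not a technicality that can be waved through. As your own $T_{2,n}$ computation shows, $\partial$ has nonzero rank exactly in the extremal degrees, so it cannot be dismissed for degree or grading reasons; and running an induction on $n$ through the exact triangle would require controlling the ranks of $\KH^{i}(L_{n-1})$ throughout the whole top window, which is essentially the statement being proved. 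As written, the proposal establishes only $\limsup\le 1$, not the theorem.

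The paper obtains the lower bound by an entirely different mechanism that sidesteps the exact triangle. First, Lemma~\ref{lem1} shows via the Kauffman bracket and the Jones skein relation that for all large $n$ one has $\Mdeg V_{L_{n+1}}(t)-\Mdeg V_{L_{n}}(t)=\tfrac{3}{2}$, so the top $t$-degree of the Jones polynomial grows linearly with slope $\tfrac{3}{2}$. Since the graded Euler characteristic of Khovanov homology is the Jones polynomial (Theorem~\ref{khovanov}) and the extremal coefficient cannot arise from cancellation, this forces $\max\{j\mid\KH^{\ast,j}(L_{n})\neq 0\}\ge 2\Mdeg V_{L_{n}}(t)+1$, i.e.\ linear growth of the top $q$-grading. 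Second, the Wehrli--Champanerkar--Kofman support bound (Theorem~\ref{spanning}) gives $j-2i\le 2-s_{0}(D_{n})=2-s_{0}(D_{0})$, a constant independent of $n$ because the all-$0$ smoothing is unchanged by inserting coherent half-twists; a large $q$-grading therefore forces a proportionally large homological degree, yielding $\max\{i\mid\KH^{i}(L_{n})\neq 0\}\ge n-O(1)$. If you want to keep your skein-exact-sequence framework you must supply the missing non-surjectivity of $\partial$ in the top range; otherwise the Jones-polynomial growth combined with the support parallelogram is the argument that actually closes the lower bound.
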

\begin{figure}[h!]
\begin{center}
\scalebox{0.6}{\includegraphics{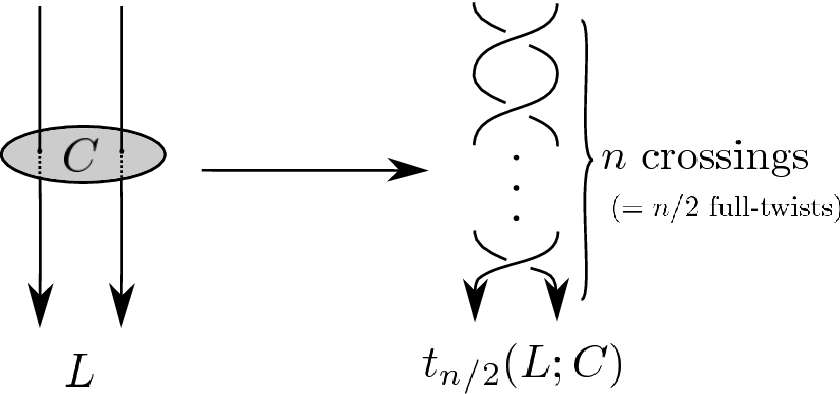}}
\end{center}
\caption{Definition of the link $t_{n/2}(L;C)$. The integer $n$ is the number of added half-twists. }
\label{twist}
\end{figure}
\par
There is Watson's work \cite{Watson1} on an asymptotic behavior of the reduced Khovanov homology under twisting. 
Ignoring grading, he gave a nice relation between the reduced Khovanov homologies of $t_{N/2}(L;C)$ and $t_{(N+1)/2}(L;C)$ for sufficiently large $N$ (\cite[Lemma $4.14$]{Watson1}). 
\par
This paper is organized as follows: 
In Section $\ref{def}$, we recall the definition of the Khovanov homology. 
In Section~$\ref{main}$, we prove our main theorem (Theorem~$\ref{mainthm}$). 
\section{Khovanov homology}\label{def}
\subsection{The definition of Khovanov homology}
In this subsection, we recall the definition of the (rational) Khovanov homology. 
Let $L$ be an oriented link. 
Take a diagram $D$ of $L$ and an ordering of the crossings of $D$. 
For each crossing of $D$, we define $0$-smoothing and $1$-smoothing as in Figure~$\ref{smoothing}$. 
A smoothing of $D$ is a diagram where each crossing of $D$ is changed by either $0$-smoothing or $1$-smoothing. 
\begin{figure}[!h]
\begin{center}
\scalebox{0.7}{\includegraphics{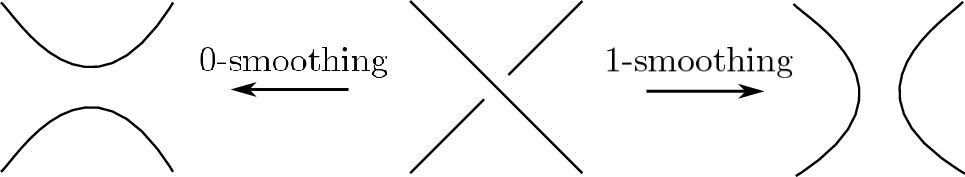}}
\end{center}
\caption{0-smoothing and 1-smoothing. }
\label{smoothing}
\end{figure}
Let $n$ be the number of the crossings of $D$. Then $D$ has $2^{n}$ smoothings. 
By using the given ordering of the crossings of $D$, we have a natural bijection between the set of smoothings of $D$ and the set $\{0, 1\}^{n}$, where, to any $\varepsilon =(\varepsilon _{1}, \dots, \varepsilon _{n})\in \{0, 1\}^{n}$, we associate the smoothing $D_{\varepsilon }$ where the $i$-th crossing of $D$ is $\varepsilon_{i}$-smoothed. 
Each smoothing $D_{\varepsilon }$ is a collection of disjoint circles. 
\par
Let $V$ be a graded free $\mathbf{Q}$-module generated by $1$ and $X$ with $\operatorname{deg}(1)=1$ and $\operatorname{deg}(X)=-1$. 
Let $k_{\varepsilon }$ be the number of the circles of the smoothing $D_{\varepsilon }$. 
Put $M_{\varepsilon }=V^{\otimes k_{\varepsilon }}$. 
The module $M_{\varepsilon }$ has a graded module structure, that is, for $v=v_{1}\otimes\cdots\otimes v_{k_{\varepsilon }}\in M_{\varepsilon }$, $\deg(v):=\deg(v_{1})+\cdots+\deg(v_{k_{\varepsilon }})$. 
Then define 
\begin{align*}
C^{i}(D)&:=\bigoplus_{|\varepsilon |=i }M_{\varepsilon }\{i\},  
\end{align*}
where $|\varepsilon |=\sum_{i=1}^{m}\varepsilon _{i}$. 
Here, $M_{\varepsilon}\{i\}$ denotes $M_{\varepsilon}$ with its gradings shift by $i$ (for a graded module $M=\bigoplus_{j\in\mathbf{Z}}M^{j}$ and an integer $i$, we define the graded module $M\{i\}=\bigoplus_{j\in\mathbf{Z}}M\{i\}^{j}$ by $M\{i\}^{j}=M^{j-i}$). 
\par
The differential map $d^{i}\colon C^{i}(D)\rightarrow C^{i+1}(D)$ is defined as follows. 
Fix an ordering of the circles for each smoothing $D_{\varepsilon }$ and associate the $i$-th tensor factor of $M_{\varepsilon }$ to the $i$-th circle of $D_{\varepsilon }$.  
Take elements $\varepsilon$ and $\varepsilon ' \in \{0, 1\}^{n}$ such that $\varepsilon _{j}=0$ and $\varepsilon' _{j}=1$ for some $j$ and that $\varepsilon _{i}=\varepsilon' _{i}$ for any $i\neq j$. 
For such a pair $(\varepsilon , \varepsilon ')$, we will define a map $d_{\varepsilon \rightarrow \varepsilon '}\colon M_{\varepsilon }\rightarrow M_{\varepsilon '}$ as follows. 
\par
In the case where two circles of $D_{\varepsilon }$ merge into one circle of $D_{\varepsilon' }$,  the map $d_{\varepsilon \rightarrow \varepsilon '}$ is the identity on all factors except the tensor factors corresponding to the merged circles where it is a multiplication map $m\colon V\otimes V\rightarrow V$ given by: 
\begin{center}
$m(1\otimes 1)=1$,\  $m(1\otimes X)=m(X\otimes 1)=X$,\  $m(X\otimes X)=0$. 
\end{center}
\par
In the case where one circle of $D_{\varepsilon }$ splits into two circles of $D_{\varepsilon' }$,  the map $d_{\varepsilon \rightarrow \varepsilon '}$ is the identity on all factors except the tensor factor corresponding to the split circle where it is a comultiplication map $\Delta \colon V\rightarrow V\otimes V$ given by:
\begin{center}
$\Delta (1)=1\otimes X+X\otimes 1$,\  $\Delta (X)=X\otimes X$. 
\end{center}
\par
If there exist distinct integers $i$ and $j$ such that $\varepsilon _{i}\neq\varepsilon '_{i}$ and that $\varepsilon _{j}\neq\varepsilon '_{j}$, then define $d_{\varepsilon \rightarrow \varepsilon '}=0$. 
\par
In this setting, we define a map $d^{i}\colon C^{i}(D)\rightarrow C^{i+1}(D)$ by $\sum_{|\varepsilon|=i}d_{\varepsilon}^{i}$, where $d_{\varepsilon}^{i}\colon M_{\varepsilon}\rightarrow C^{i+1}(D)$ is defined by 
\begin{align*}
d^{i}(v):=\sum_{|\varepsilon'|=i+1 }(-1)^{l(\varepsilon, \varepsilon' )}d_{\varepsilon \rightarrow \varepsilon '}(v).  
\end{align*}
Here $v\in M_{\varepsilon }\subset C^{i}(D)$ and $l(\varepsilon, \varepsilon')$ is the number of $1$'s in front of (in our order) 
the factor of $\varepsilon$ which is different from $\varepsilon'$. 
\par
We can check that ($C^{i}(D)$, $d^{i}$) is a cochain complex and we denote its $i$-th homology group by $H^{i}(D)$. 
We call these the unnormalized homology groups of $D$. 
Since the map $d^{i}$ preserves the grading of $C^{i}(D)$, the group $H^{i}(D)$ has a graded structure $H^{i}(D)=\bigoplus_{j\in\mathbf{Z}}H^{i,j}(D)$ induced by that of $C^{i}(D)$. 
For any link diagram $D$, we define its Khovanov homology $\KH^{i, j}(D)$ by 
\begin{center}
$\KH^{i, j}(D)=H^{i+n_{-}, j-n_{+}+2n_{-}}(D)$, 
\end{center}
where $n_{+}$ and $n_{-}$ are the number of the positive and negative crossings of $D$, respectively. 
The grading $i$ is called the homological degree and $j$ is called the $q$-grading. %
\begin{thm}[\cite{Bar-Natan-1}, \cite{khovanov1}]\label{khovanov}
For any oriented link $L$ and a diagram $D$ of $L$, 
the homology group $\KH(D)$ is preserved under the Reidemeister moves. 
In this sense, we can denote $\KH(L)=\KH(D)$. 
Moreover the graded Euler characteristic of the homology $\KH(L)$ equals the Jones polynomial of $L$, that is, 
\begin{align*}
V_{L}(t)=(q+q^{-1})^{-1}\sum_{i, j\in\mathbf{Z}}(-1)^{i}q^{j}\rank{\KH^{i, j}(L)}\Big|_{q=-t^{\frac{1}{2}}}, 
\end{align*}
where $V_{L}(t)$ is the Jones polynomial of $L$. 
\end{thm}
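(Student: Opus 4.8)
The statement has two essentially independent parts: invariance of $\KH(D)$ under the three Reidemeister moves, and the identification of its graded Euler characteristic with the Jones polynomial. I would treat them separately, the first being the substantial one. For Reidemeister invariance the plan is to show that whenever $D$ and $D'$ are related by a single Reidemeister move, the normalized complexes computing $\KH(D)$ and $\KH(D')$ are chain homotopy equivalent; since homology is a chain homotopy invariant, this yields $\KH(D)\cong\KH(D')$. Because each move is supported in a small disk while the rest of the diagram is untouched, the cube of resolutions of $D$ factors, over the crossings inside the disk, as a tensor product of a ``local'' complex built from the tangle in the disk with the fixed ``outside'' data. Hence it suffices to exhibit a chain homotopy equivalence of the local complexes and to check compatibility with the outside part; functoriality of the cube construction then propagates it to the full complexes $C(D)$ and $C(D')$.

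The two algebraic tools I would use throughout are: delooping, the graded isomorphism $V\otimes M\cong M\{1\}\oplus M\{-1\}$ reflecting that $V$ has graded dimension $q+q^{-1}$, which removes a free circle from a resolution at the cost of a grading shift; and Gaussian elimination, which cancels any component of a differential that is an isomorphism without changing the chain homotopy type. For the Reidemeister~I move the resolution containing the extra circle created by the kink is delooped, after which one summand of the differential becomes an isomorphism and is eliminated, leaving a complex isomorphic to that of the kinkless diagram up to a homological and $q$-shift; one then checks that this shift is exactly compensated by the change in the writhe, i.e.\ in the $n_+$ and $n_-$ appearing in the normalization $\KH^{i,j}(D)=H^{i+n_-,\,j-n_++2n_-}(D)$, so that $\KH$ itself is unchanged. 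The Reidemeister~II move is handled in the same style: among its four resolutions there is an acyclic, Gaussian-eliminable piece whose removal leaves the two-strand complex, here with no net writhe change.

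The main obstacle is the Reidemeister~III move, whose local cube is genuinely larger. I would reduce it to the previous cases by the ``categorified bracket'' strategy: apply the Reidemeister~II simplification to one crossing on each side of the move and show that both $D$ and $D'$ simplify to the same complex built from the common planar skeleton of the triangle; the resulting homotopy equivalences then compose to give $C(D)\simeq C(D')$. Verifying that the eliminations on the two sides match up, and that all of the signs $(-1)^{l(\varepsilon,\varepsilon')}$ in the differential are respected under delooping and elimination, is the delicate bookkeeping that constitutes the bulk of the argument.

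For the second part I would use that the graded Euler characteristic of a complex of finite-dimensional $\mathbf{Q}$-vector spaces equals that of its homology, applied one $q$-grading at a time since $d^{i}$ preserves $j$. Thus the graded Euler characteristic of $\KH(L)$ equals $(-1)^{n_-}q^{\,n_+-2n_-}$ times $\sum_i(-1)^i$ of the graded dimensions of the $C^{i}(D)$. Each smoothing $D_\varepsilon$ contributes $(q+q^{-1})^{k_\varepsilon}$ since $M_\varepsilon=V^{\otimes k_\varepsilon}$, and with the shift $\{|\varepsilon|\}$ and sign $(-1)^{|\varepsilon|}$ one obtains the state sum $\sum_{\varepsilon\in\{0,1\}^{n}}(-1)^{|\varepsilon|}q^{|\varepsilon|}(q+q^{-1})^{k_\varepsilon}$. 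I would recognize this as the unnormalized Kauffman bracket in Khovanov's convention, so that after the normalization factor it becomes the unreduced Jones polynomial; dividing by $q+q^{-1}$ and substituting $q=-t^{1/2}$ then yields $V_L(t)$. This part is purely formal once the state sum is matched to the Kauffman bracket, so essentially all of the difficulty lies in Reidemeister invariance, and within it in the Reidemeister~III move.
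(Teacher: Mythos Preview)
The paper does not prove this theorem at all: it is stated with citations to Bar-Natan and Khovanov and then used as a black box, so there is no ``paper's own proof'' to compare against. Your outline is therefore not a match to anything in the paper, but it is a reasonable sketch of the standard modern proof (closer to Bar-Natan's simplification than to Khovanov's original direct chain-map constructions). The delooping/Gaussian elimination strategy for Reidemeister I and II is correct, as is the Euler characteristic computation via the state sum; your reduction of Reidemeister III to Reidemeister II is the step that would require the most care to make rigorous, since one must work in a category of complexes of tangles (or planar algebras) rather than of links to make sense of ``applying RII to one crossing on each side,'' but that is indeed how the argument goes.
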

The following is an immediate consequence of the definition. 
\begin{prop}\label{i_max}
For any oriented link $L$, we have  
\begin{center}
$\max\{i\in\mathbf{Z}\mid\KH^{i} (L)\neq 0\}\leq c_{+}(L)$, 
\end{center}
where $c_{+}(L)$ is defined in Theorem~$\ref{tagami_cable}$. 
\end{prop}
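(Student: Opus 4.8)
The plan is to prove the inequality first at the level of a single diagram and then pass to the link invariant. The whole argument is a bookkeeping of the homological grading, so I would carry it out by unwinding the two definitions involved: the indexing set for the unnormalized complex, and the grading shift relating $H^{\bullet}(D)$ to $\KH^{\bullet}(D)$.

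First I would fix an arbitrary diagram $D$ of $L$ with $n=n_{+}+n_{-}$ crossings. Since $C^{i}(D)=\bigoplus_{|\varepsilon|=i}M_{\varepsilon}\{i\}$ and $\varepsilon$ ranges over $\{0,1\}^{n}$, the quantity $|\varepsilon|=\sum_{k}\varepsilon_{k}$ takes only the values $0,1,\dots,n$. Hence $C^{i}(D)=0$ whenever $i<0$ or $i>n$, and therefore the unnormalized homology satisfies $H^{i}(D)=0$ outside the range $0\leq i\leq n$. In particular $H^{i}(D)=0$ for every $i>n$.

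Next I would invoke the definition $\KH^{i,j}(D)=H^{i+n_{-},\,j-n_{+}+2n_{-}}(D)$. Summing over the $q$-grading $j$ gives $\KH^{i}(D)\cong H^{i+n_{-}}(D)$, so the nonvanishing of $\KH^{i}(D)$ forces $0\leq i+n_{-}\leq n=n_{+}+n_{-}$, i.e. $i\leq n_{+}=c_{+}(D)$. Thus
\begin{align*}
\max\{i\in\mathbf{Z}\mid \KH^{i}(D)\neq 0\}\leq c_{+}(D).
\end{align*}
Finally, because $\KH(L)\cong\KH(D)$ is independent of the chosen diagram by Theorem~$\ref{khovanov}$, the left-hand side above equals $\max\{i\mid\KH^{i}(L)\neq 0\}$ for \emph{every} diagram $D$ of $L$. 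Taking the minimum of the right-hand side over all such diagrams yields $\max\{i\mid\KH^{i}(L)\neq 0\}\leq\min_{D}c_{+}(D)=c_{+}(L)$, as desired.

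I do not anticipate any genuine obstacle here: the statement is essentially a reformulation of the grading conventions, and the only point demanding care is correctly applying the homological shift by $n_{-}$ so that the bound comes out in terms of $n_{+}$ (the positive crossing count) rather than the total crossing number $n$. The passage from a fixed diagram to the invariant is immediate once one recalls that $\KH(L)$ does not depend on $D$ while $c_{+}(D)$ does.
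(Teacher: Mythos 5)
Your proof is correct and follows essentially the same route as the paper: the unnormalized complex vanishes in homological degrees above the crossing number, the shift by $n_{-}$ converts this into the bound $i\leq c_{+}(D)$, and invariance of $\KH(L)$ lets one minimize over diagrams. Your write-up simply makes explicit the grading bookkeeping that the paper's proof leaves implicit.
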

\begin{proof}
For any diagram $D$ of $L$, by the definition, we have $H^{i}(D)=0$ if $i>c(D)$, where $c(D)$ is the number of the crossings of $D$. 
Hence $\KH^{i}(L)=\KH^{i}(D)=0$ for $i>c_{+}(D)$. 
Since $\KH^{i}(L)$ does not depend on the choice of $D$, we have $\KH^{i}(L)=0$ for $i>c_{+}(K)$. 
\end{proof}
%
\begin{example}
For example, the Khovanov homology of the left-handed trefoil knot $K$ (depicted in Figure~$\ref{example}$) is given as follows. 
\begin{align*}
\KH^{i, j}(K)=
\begin{cases}
\mathbf{Q}& \text{if\ }(i, j)=(0, -1), (0, -3), (-2, -5), (-3, -9), \\
0& \text{otherwise}. 
\end{cases}
\end{align*}

\begin{figure}[!h]
\begin{center}
\scalebox{0.3}{\includegraphics{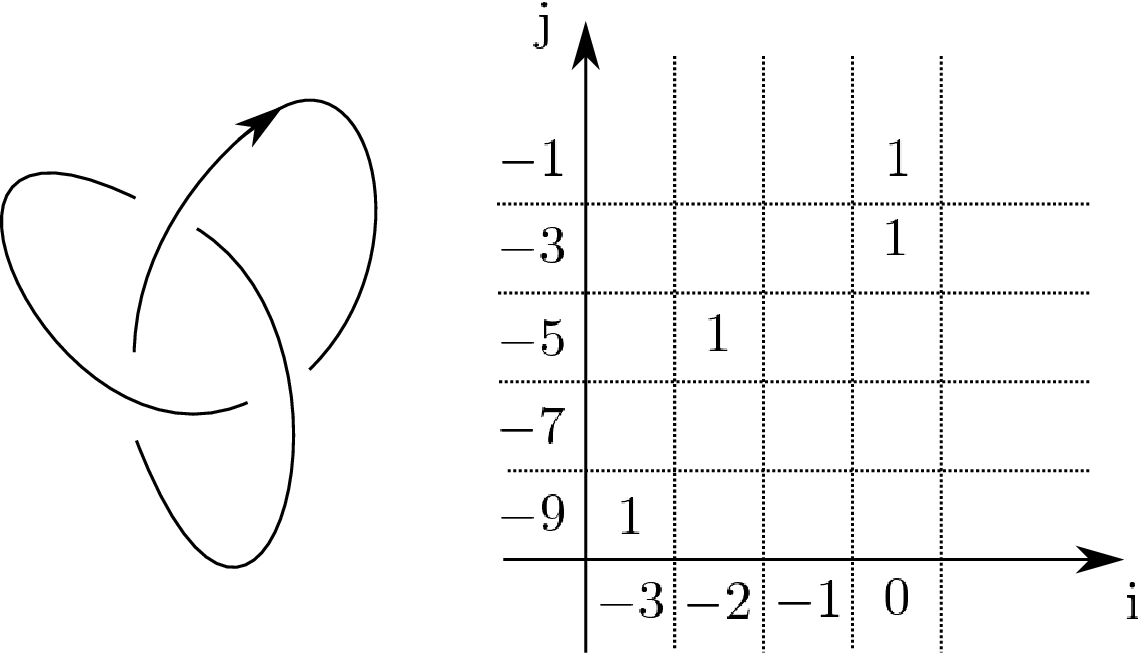}}
\end{center}
\caption{The left-handed trefoil and the table of the Khovanov homology of the knot whose $(i, j)$ element is $\dim_{\mathbf{Q}} \KH^{i, j}(K)$.  }
\label{example}
\end{figure}
\end{example}
%
%
\subsection{Main tool}
Our main tool is the following theorem proved by Wehrli \cite{spanning_kh} and Champanerkar and Kofman \cite{spanning2_kh}. 
The {\it {\bf 0}-smoothing} of a diagram $D$ is the disjoint circles obtained from $D$ by $0$-smoothing all crossings (see Figure~$\ref{smoothing}$). 
We define the {\it {\bf 1}-smoothing} of a diagram $D$ analogously. 
Then we have the following. 
\begin{thm}[\cite{spanning_kh}, \cite{spanning2_kh}]\label{spanning}
Let $D$ be a link diagram. If $H^{i, j}(D)\neq 0$, we have 
\begin{center}
$s_{1}(D)-2-c(D)\leq j-2i\leq 2-s_{0}(D)$, 
\end{center}
where $c(D)$ is the number of the crossings of $D$, and $s_{0}(D)$ and $s_{1}(D)$ are the numbers of the circles appearing in the {\bf 0}-smoothing and the {\bf 1}-smoothing of $D$, respectively. 
\end{thm}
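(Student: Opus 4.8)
The plan is to prove the two inequalities $s_1(D)-2-c(D)\leq j-2i$ and $j-2i\leq 2-s_0(D)$ separately, by finding the extreme $q$-gradings that can support nonzero homology within each homological degree and tracking how the diagram-dependent shift from $H$ to the unnormalized complex interacts with them. Since the statement concerns the unnormalized homology $H^{i,j}(D)$ (note $H$, not $\KH$), I would work directly with the cube of smoothings described in Section~\ref{def}, where $C^i(D)=\bigoplus_{|\varepsilon|=i}M_\varepsilon\{i\}$. The key observation is that for a fixed homological degree $i$, the nonzero $q$-gradings of $C^{i}(D)$ range over a window whose width is controlled by the number of circles $k_\varepsilon$ in each smoothing $D_\varepsilon$ with $|\varepsilon|=i$, because each tensor factor of $V$ contributes a degree in $\{+1,-1\}$ and the shift adds $i$.

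First I would establish the upper bound $j-2i\leq 2-s_0(D)$. For this I focus on the smallest homological degree contributing to the relevant diagonal. The $\bf{0}$-smoothing $D_{\bm 0}$ (all crossings $0$-smoothed) sits in homological degree $0$ and has $s_0(D)$ circles, so $M_{\bm 0}=V^{\otimes s_0(D)}$ has top degree $s_0(D)$; the grading shift $\{0\}$ is trivial here, so the maximal $j$ with $C^{0,j}\neq 0$ is $s_0(D)$, giving $j-2i=j\leq s_0(D)$ at $i=0$. To upgrade this to the claimed $2-s_0(D)$ bound I would instead track the quantity $j-2i$ across the whole complex and show it is maximized at the apex behavior: moving along an edge of the cube (increasing $i$ by one) either merges two circles via $m$ or splits one via $\Delta$, and in both cases the effect on the maximal available $j$ is such that $j-2i$ cannot increase, so its maximum is attained at $i=0$, namely at $D_{\bm 0}$. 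A careful bookkeeping of how $m$ and $\Delta$ change the top degree of $V^{\otimes k}$ against the $+2$ contributed by the shift in $2i$ yields the constant $2-s_0(D)$.

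Symmetrically, I would prove the lower bound $s_1(D)-2-c(D)\leq j-2i$ by examining the $\bf{1}$-smoothing $D_{\bm 1}$, which lives in the top homological degree $i=c(D)$ and has $s_1(D)$ circles. Here $M_{\bm 1}=V^{\otimes s_1(D)}$ has minimal degree $-s_1(D)$, and the shift is $\{c(D)\}$, so the minimal $j$ with $C^{c(D),j}\neq 0$ is $-s_1(D)+c(D)$, giving $j-2i = -s_1(D)+c(D)-2c(D) = -s_1(D)-c(D)$ at the apex. The analogous monotonicity argument—showing $j-2i$ cannot decrease as we move down the cube by decreasing $i$—then pins the minimum of $j-2i$ at $D_{\bm 1}$, and combining the constants gives the stated $s_1(D)-2-c(D)$ after the same $\pm 2$ adjustment that appears in the upper bound.

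The main obstacle, and the step requiring genuine care rather than routine calculation, is the monotonicity of $j-2i$ along the cube edges. The differential $d$ is built from $m$ and $\Delta$, which are \emph{not} homogeneous of the same degree: $m$ drops $q$-degree by $1$ while $\Delta$ also drops it by $1$, yet both must be compatible with the overall grading preservation of $d^i$ once the shift $\{i\}$ is incorporated. I expect the crux is to argue that the \emph{support} (the set of $j$ for which $C^{i,j}\neq 0$), not merely the homology, shifts by exactly $+2$ in $j$ for each unit increase in $i$ at the boundary of the support window, so that the extreme diagonals $j-2i=\text{const}$ are genuinely realized only at $D_{\bm 0}$ and $D_{\bm 1}$. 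Since $H^{i,j}(D)$ is a subquotient of $C^{i,j}(D)$, once the support of the chain groups is confined to the band $s_1(D)-2-c(D)\leq j-2i\leq 2-s_0(D)$, the homology is automatically confined to the same band, which completes the proof.
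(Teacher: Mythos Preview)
The paper does not prove this theorem; it is quoted as a tool from Wehrli and Champanerkar--Kofman. So there is no ``paper's own proof'' to compare against, but your proposal has a genuine gap regardless.

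Your strategy is to bound the support of the chain groups $C^{i,j}(D)$ and then pass to homology as a subquotient. That argument is correct as far as it goes, but it only yields the weaker inequality $j-2i\leq s_0(D)$, not the claimed $j-2i\leq 2-s_0(D)$. Concretely: at $i=0$ the module is $V^{\otimes s_0(D)}$ with top grading $s_0(D)$, and you correctly observe that $\max_\varepsilon(k_\varepsilon-|\varepsilon|)$ is attained at $\varepsilon=\mathbf{0}$, so the chain-level maximum of $j-2i$ is exactly $s_0(D)$. For any diagram with $s_0(D)\geq 2$ this is strictly larger than $2-s_0(D)$. Your sentence ``a careful bookkeeping \dots\ yields the constant $2-s_0(D)$'' is where the argument fails: no amount of bookkeeping on the support of $C^{i,j}$ can produce $2-s_0(D)$, because there really are nonzero chain groups with $j-2i=s_0(D)$ (e.g.\ the generator $1^{\otimes s_0(D)}$ at $i=0$). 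The theorem is a statement about \emph{homology} that is strictly sharper than the chain-level bound, so a pure support argument cannot succeed. The symmetric lower bound has the same defect.

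What is actually needed is the spanning-tree model for Khovanov homology developed in the cited papers: one shows that $H^{*,*}(D)$ is computed by a complex whose generators are indexed by spanning trees of the Tait graph, and each such generator sits on a single diagonal $j-2i=\mathrm{const}$ determined by the tree's activities. Counting the possible activities gives the width $(2-s_0(D))-(s_1(D)-2-c(D))=2+2g_T(D)$, where $g_T(D)$ is the Turaev genus; this is the content of the inequality. That reduction is the missing idea, and it does not follow from the edge-by-edge monotonicity you describe.
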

\section{The main theorem and its proof}\label{main}
In this section, we prove our main theorem (Theorem~$\ref{mainthm}$). 
To prove this theorem, we first compute the maximal degree of the Jones polynomial and prove that it is proportional to the number $n$ of twists if $n$ is sufficiently large (Lemma~$\ref{lem1}$). 
Since the Jones polynomial is the graded Euler characteristic of the Khovanov homology, we obtain corresponding bounds on the gradings in which it is supported. 
\par
Let $L_{0}$ be an oriented link and $D_{0}$ be a diagram of $L_{0}$. 
Let $C$ be a disk as in Figure~$\ref{L_n}$ and $D_{n}$ be the diagram obtained from $D_{0}$ by adding $n$ half twists at $C$ as in Figure~$\ref{L_n}$. 
The diagram $D_{n}$ is a diagram of $t_{n/2}(L_{0};C)$. Put $L_{n}:=t_{n/2}(L_{0};C)$. 
\begin{figure}[!h]
\begin{center}
\scalebox{0.5}{\includegraphics{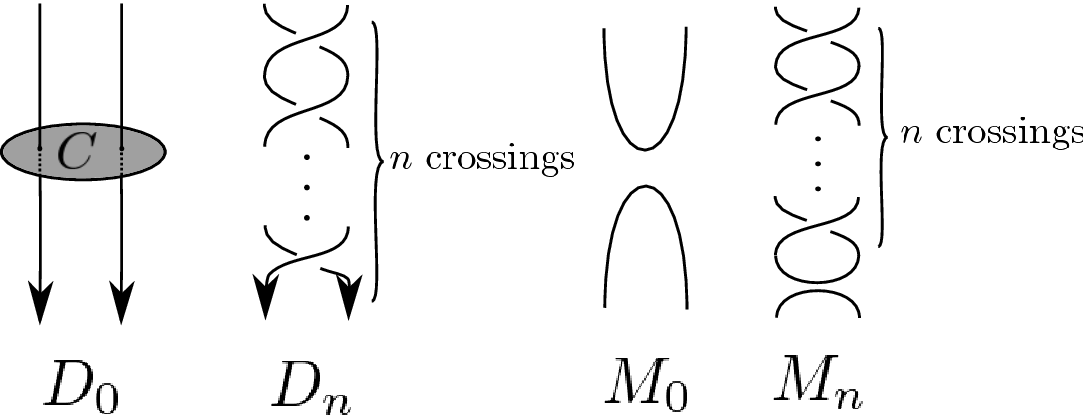}}
\end{center}
\caption{The diagrams $D_{0}$, $D_{n}$, $M_{0}$ and $M_{n}$. }
\label{L_n}
\end{figure}
\par
To prove Theorem~$\ref{mainthm}$, we use the following lemma. 
\begin{lem}\label{lem1}
There is a positive integer $N$ such that for any $n\geq N$
\begin{center}
$\Mdeg V_{L_{n+1}}(t)-\Mdeg V_{L_{n}}(t)= \frac{3}{2}$, 
\end{center}
where 
$\Mdeg V_{L}(t):=\max \{i\in\frac{1}{2}\mathbf{Z}\mid\text{the coefficient of\ }t^{i}\text{\ in\ }V_{L}(t) \text{\ is not zero}\}$. 
\end{lem}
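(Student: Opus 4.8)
The plan is to reduce the statement to a linear recurrence for the Jones polynomials $V_{L_n}(t)$ and then read the top degree off the closed form. First I would apply the oriented skein relation
\begin{equation*}
t^{-1}V_{L_+}(t) - t\,V_{L_-}(t) = (t^{1/2}-t^{-1/2})\,V_{L_0}(t)
\end{equation*}
at the topmost crossing of the twist region in $D_{n+1}$. Since the two strands meeting $C$ carry the same orientation, the oriented resolution of that crossing is the one leaving the two strands parallel, which produces $L_n$, while switching the crossing creates a cancelling pair removable by a Reidemeister~II move and so produces $L_{n-1}$. Hence, for all $n\geq 1$,
\begin{equation*}
V_{L_{n+1}}(t) = (t^{3/2}-t^{1/2})\,V_{L_n}(t) + t^2\,V_{L_{n-1}}(t).
\end{equation*}

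Next I would solve this constant-coefficient recurrence. Its characteristic equation $x^2-(t^{3/2}-t^{1/2})x-t^2=0$ has discriminant $t(t+1)^2$, so the roots are exactly $t^{3/2}$ and $-t^{1/2}$. Therefore there are $\alpha(t),\beta(t)\in\mathbf{Q}(t^{1/2})$, determined by $V_{L_0}$ and $V_{L_1}$, with
\begin{equation*}
V_{L_n}(t) = \alpha(t)\,t^{3n/2} + \beta(t)\,(-1)^{n}t^{n/2}.
\end{equation*}
If $\alpha\neq 0$, then because $3n/2$ grows strictly faster than $n/2$, for all sufficiently large $n$ the top term of $V_{L_n}$ comes from the first summand and no cancellation can occur at the top; thus $\Mdeg V_{L_n}=\Mdeg\alpha+\tfrac{3n}{2}$, and the successive differences stabilize at $3/2$. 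Concretely, any $N$ with $N>\Mdeg\beta-\Mdeg\alpha$ works, which gives the claimed constant increment.

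The crux is therefore to prove $\alpha\neq 0$, and this is where the auxiliary diagrams $M_0,M_n$ enter. To control the leading coefficient I would pass to the Kauffman bracket and resolve the twist region crossing by crossing: one resolution continues down the region while the other caps it off, after which the remaining crossings become kinks removed by Reidemeister~I with factors $-A^{\mp 3}$. This yields an expression $\langle D_n\rangle=\tilde P(A)\,A^{n}+Q(A)\,(-A^{-3})^{n}$ in which $Q(A)$ is a nonzero Laurent-polynomial multiple of $\langle M_0\rangle$; after the writhe normalization and the substitution $A=t^{-1/4}$, the root $-A^{-3}$ becomes the dominant root $t^{3/2}$, and $\alpha(t)$ equals a nonzero monomial times $\langle M_0\rangle|_{A=t^{-1/4}}$. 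Since $\langle M_0\rangle$ is, up to a unit, the Jones polynomial of the link obtained from $L_0$ by the capped closure at $C$, and the Jones polynomial of any link is nonzero (for instance $V_L(1)=(-2)^{c-1}$, where $c$ is the number of components), I conclude $\alpha\neq 0$.

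The main obstacle I expect is precisely the bookkeeping in this bracket computation: getting the sign of the added crossings, the writhe change $w(D_{n+1})-w(D_n)$, and the Reidemeister~I kink factor all correct, so that the capped channel $M_0$ is genuinely the dominant one. These conventions are exactly what pins the increment down to $+3/2$ rather than $+1/2$ (the subdominant root $-t^{1/2}$ alone would give $1/2$), so they must be tracked carefully; everything else is routine once $\alpha\neq 0$ is secured.
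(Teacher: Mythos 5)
Your proof is correct, but it is organized quite differently from the paper's. You solve the two-term skein recurrence $V_{L_{n+1}}=(t^{3/2}-t^{1/2})V_{L_n}+t^2V_{L_{n-1}}$ in closed form, $V_{L_n}=\alpha\,t^{3n/2}+\beta\,(-1)^nt^{n/2}$, and reduce everything to the nonvanishing of the leading coefficient $\alpha$, which you extract from the Kauffman-bracket unrolling of the twist region and the fact that $\langle M_0\rangle\neq 0$ (via $V_L(1)=(-2)^{c-1}$). The paper uses the same two ingredients but assembles them as a proof by contradiction: from the one-step bracket identity $V_{L_n}=-t^{1/2}V_{L_{n-1}}\pm t^{3n/2}g(t)$ with $g$ a fixed multiple of $\langle M_0\rangle|_{A=t^{-1/4}}$, it shows the degree increment cannot stay $\leq\frac12$ forever, and then bootstraps with the skein recurrence: once $\Mdeg V_{L_N}-\Mdeg V_{L_{N-1}}>\frac12$, the term $(t^{3/2}-t^{1/2})V_{L_N}$ strictly dominates $t^2V_{L_{N-1}}$, forcing the increment to equal $\frac32$ from then on. Your route buys transparency: it isolates exactly where the lemma could fail ($\alpha=0$, equivalently $\langle M_0\rangle=0$) and disposes of it explicitly, a point the paper's contradiction step uses only implicitly when it asserts that the $t^{3n/2}$ term eventually dominates. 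The paper's route buys lighter bookkeeping: it never leaves the ring of Laurent polynomials, whereas your $\alpha,\beta$ live in $\mathbf{Q}(t^{1/2})$ (indeed your $Q(A)$ is a nonzero \emph{rational-function} multiple of $\langle M_0\rangle$, namely $-\frac{A^2}{A^4+1}\langle M_0\rangle$, not a Laurent-polynomial multiple as you wrote), so you must interpret $\Mdeg$ for rational functions, say via Laurent expansion at infinity; this is harmless but should be said. With that small caveat, and the crossing-sign and writhe bookkeeping you already flag, your argument is complete.
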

\begin{proof}
The Kauffman bracket of an (unoriented) link diagram is given as follows:
\begin{itemize}
\item $\langle $ \raisebox{-0.8mm}{\scalebox{0.13}{\includegraphics{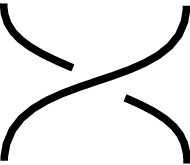}}} $\rangle=A\langle $ \raisebox{-1.2mm}{\scalebox{0.13}{\includegraphics{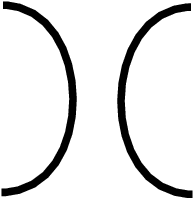}}} $ \rangle+A^{-1}\langle $
\raisebox{-1mm}{\scalebox{0.13}{\includegraphics{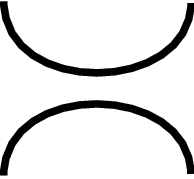}}} $ \rangle$, 
\item $\langle \bigcirc \rangle=1$, 
\item $\langle D \sqcup\bigcirc \rangle=(-A^{-2}-A^{2})\langle D \rangle$. 
\end{itemize}
For any link $L$, the Jones polynomial is given by 
\begin{center}
$V_{L}(t)=(-A)^{-3w(D)}\langle D \rangle \Big|_{A=t^{-\frac{1}{4}}}$, 
\end{center}
where $D$ is a diagram of $L$ and $w(D)$ is the writhe of $D$. 
Hence we have
\begin{align}
V_{L_{n}}(t)&=(-A)^{-3w(D_{n})}(A\langle D_{n-1} \rangle+A^{-1}\langle M_{n-1} \rangle)\Big|_{A=t^{-\frac{1}{4}}}\label{1}\\
&=(-A)^{-3w(D_{n-1})}(-A)^{-3}A\langle D_{n-1} \rangle\Big|_{A=t^{-\frac{1}{4}}}\nonumber\\
&\ \ \ +(-A)^{-3w(D_{n})}A^{-1}(-A)^{-3(n-1)}\langle M_{0} \rangle\Big|_{A=t^{-\frac{1}{4}}}\nonumber\\
&=-t^{\frac{1}{2}}V_{L_{n-1}}(t)+(-1)^{-3(w(D_{n})+n-1)}A^{-3(w(D_{0})+n)-3n+2}\langle M_{0} \rangle\Big|_{A=t^{-\frac{1}{4}}}\nonumber\\
&=-t^{\frac{1}{2}}V_{L_{n-1}}(t)+(-1)^{-3(w(D_{n})+n-1)}t^{\frac{3}{2}n}(A^{-3w(D_{0})+2}\langle M_{0} \rangle )\Big|_{A=t^{-\frac{1}{4}}}, \nonumber
\end{align}
where $M_{n}$ is the diagram depicted in Figure~$\ref{L_n}$. 
\par
By way of contradiction, assume that for any positive integer $n$, $\Mdeg V_{L_{n}}(t)-\Mdeg V_{L_{n-1}}(t)\leq  \frac{1}{2}$. Then we have $\Mdeg V_{L_{n}}(t)\leq  \frac{1}{2}n+\Mdeg V_{L_{0}}(t)$. 
Hence, from (\ref{1}), there is a positive integer $N'$ such that 
\begin{center}
$\Mdeg V_{L_{n}}(t)=\Mdeg (t^{\frac{3}{2}n}((A^{-3(w(D_{0})+2)}\langle M_{0} \rangle )\Big|_{A=t^{-\frac{1}{4}}})(t))$
\end{center}
for any $n\geq N'$. 
In particular, $\Mdeg V_{L_{N'+1}}(t)-\Mdeg V_{L_{N'}}(t)=\frac{3}{2}$. 
This is a contradiction. 
Hence, there is a positive integer $N$ such that 
\begin{align}
\Mdeg V_{L_{N}}(t)-\Mdeg V_{L_{N-1}}(t)> \frac{1}{2}. \label{aa} 
\end{align}
Then, we have 
\begin{align*}
\Mdeg V_{L_{N+1}}(t)-\Mdeg V_{L_{N}}(t)=\frac{3}{2}>\frac{1}{2}. 
\end{align*}
In fact, by the skein relation $t^{-1}V_{L_{+}}(t)-tV_{L_{-}}(t)=(t^{1/2}-t^{-1/2})V_{L_{0}}(t)$ for a usual skein triple $(L_{+}, L_{-}, L_{0})$, 
we obtain 
\begin{align*}
V_{L_{N+1}}(t)=t^{2}V_{L_{N-1}}(t)+(t^{\frac{3}{2}}-t^{\frac{1}{2}})V_{L_{N}}(t). 
\end{align*}
From $(\ref{aa})$, we have $\Mdeg V_{L_{N+1}}(t)=\Mdeg t^{\frac{3}{2}}V_{L_{N}}(t)=\frac{3}{2}+\Mdeg V_{L_{N}}(t). $
Inductively, we obtain $\Mdeg V_{L_{n+1}}(t)=\frac{3}{2}+\Mdeg V_{L_{n}}(t)$ for any $n\geq N$. 
\end{proof}
\begin{proof}[Proof of Theorem~$\ref{mainthm}$]
Let $L_{0}$, $L_{n}$, $D_{0}$ and $D_{n}$ be as above. 
Note that $s_{0}(D_{n})=s_{0}(D_{0})$, where $s_{0}(D_{0})$ and $s_{0}(D_{n})$ are introduced in Theorem~$\ref{spanning}$. 
By Theorem~$\ref{spanning}$, if $H^{i, j}(D_{n})\neq 0$, 
we have 
\begin{align}
j-2i\leq 2-s_{0}(D_{0}). \label{a}
\end{align}
\par
Since the graded Euler characteristic of the Khovanov homology is the Jones polynomial (Theorem~$\ref{khovanov}$), we obtain 
\begin{center}
$\max \{j\in\mathbf{Z}\mid\KH^{\ast, j}(D)\neq 0\}\geq 2\Mdeg V_{D}(t)+1$ 
\end{center}
for any link diagram $D$. Moreover 
\begin{align}
\max\{j\in\mathbf{Z}\mid H^{\ast, j}(D)\neq 0\}\geq 2\Mdeg V_{D}(t)+1-c_{+}(D)+2c_{-}(D), \label{b}
\end{align}
where $c_{+}(D)$ and $c_{-}(D)$ are the numbers of the positive and negative crossings of $D$, respectively. 
Put $f(D_{n}):=2\Mdeg V_{L_{n}}(t)+1-c_{+}(D_{n})+2c_{-}(D_{n})$ and $k(D_{n}):=\frac{1}{2}(f(D_{n})-2+s_{0}(D_{0}))$. 
From ($\ref{a}$), ($\ref{b}$), the definition of $H^{i}$ (the unnormalized Khovanov homology) and Figure~$\ref{supp}$, we have
\begin{align}
k(D_{n})\leq \max\{i\in\mathbf{Z}\mid H^{i}(D_{n})\neq 0\} \leq c(D_{n})=c(D_{0})+n. \label{c}
\end{align}
By Lemma~$\ref{lem1}$, there is a positive integer $N$ such that $k(D_{n+1})-k(D_{n})=1$ for any $n\geq N$. 
In particular $k(D_{n+1})=n-N+k(D_{N+1})$. 
From ($\ref{c}$), we have 
\begin{align*}
n-N+k(D_{N+1})\leq \max\{i\in\mathbf{Z}\mid \KH^{i}(L_{n})\neq 0\}+c_{-}(D_{0}) \leq c(D_{0})+n. 
\end{align*}
This implies Theorem~$\ref{mainthm}$. 
\end{proof}
\begin{figure}[!h]
\begin{center}
\scalebox{0.4}{\includegraphics{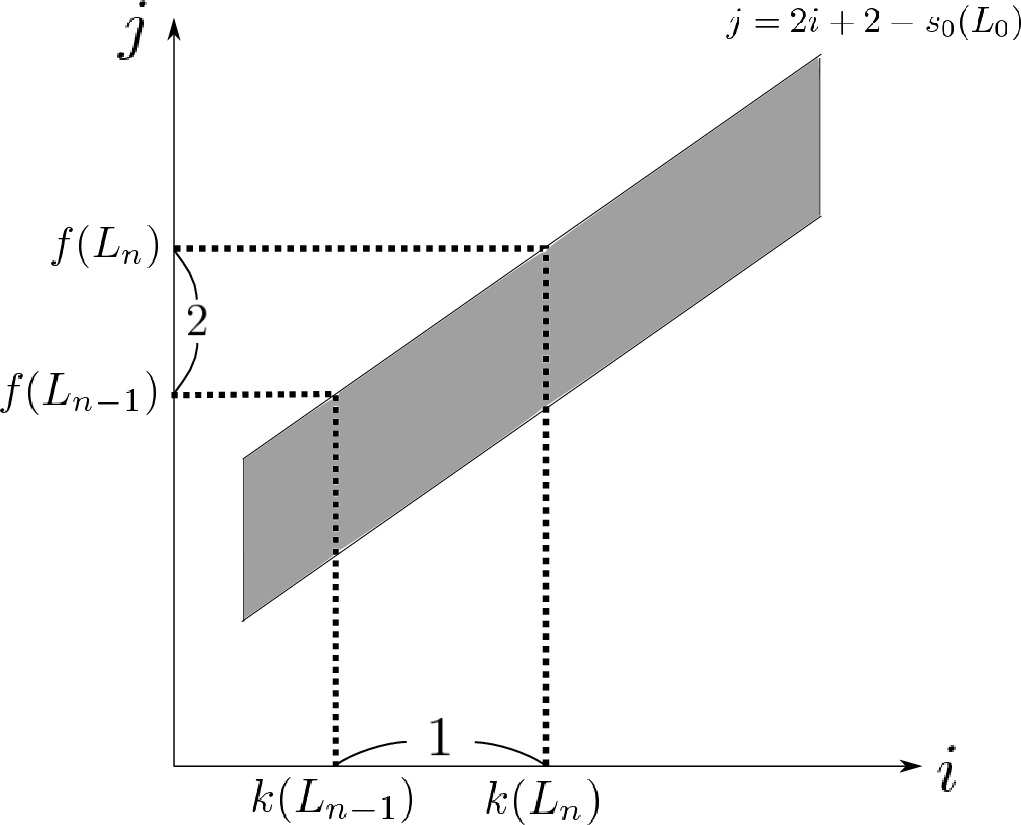}}
\end{center}
\caption{The gray parallelogram contains the support of $H^{i, j}$. }
\label{supp}
\end{figure}
%
%
\noindent{\bf Acknowledgements: }
The author would like to thank Hitoshi Murakami for his encouragements and helpful comments. 
He also would like to thank the referee. 
He was supported by JSPS KAKENHI Grant Number 25001362. 
%
%
%
\bibliographystyle{amsplain}
\bibliography{mrabbrev,tagami}
\end{document}